\newtheorem{theorem}{Theorem}[section]
\newtheorem{lemma}[theorem]{Lemma}
\newtheorem{corollary}[theorem]{Corollary}
\newtheorem{proposition}[theorem]{Proposition}
\newtheorem{remark}[theorem]{Remark}
\theoremstyle{definition}
\newtheorem{definition}[theorem]{Definition}
\newcommand{\Ric}{{\rm Ric}}
\newcommand{\n}{\nabla}
\newcommand{\Vol}{{\mathrm Vol}}
\begin{document}

\title[Structure of generalized Yamabe solitons]
{Structure of generalized Yamabe solitons and its applications}

\author{Shun Maeta}
\address{Department of Mathematics, Faculty of Education, Chiba University,
Department of Mathematics and Informatics, Graduate School of Science and Engineering, 
Chiba University, 1-33, Yayoicho, Inage, Chiba, 263-8522, Japan.}
\curraddr{}
\email{shun.maeta@gmail.com~{\em or}~shun.maeta@chiba-u.jp}
\thanks{The author is partially supported by the Grant-in-Aid for Young Scientists, No.19K14534, Japan Society for the Promotion of Science, and Grant-in-Aid for Scientific Research (C), No.23K03107, Japan Society for the Promotion of Science..}

\subjclass[2010]{53C21, 53C25, 53C20}

\date{}

\dedicatory{}

\keywords{Yamabe solitons; Yamabe flow; $k$-Yamabe solitons; Conformal gradient solitons.}

\commby{}

\begin{abstract}
We consider the broadest concept of the gradient Yamabe soliton, the conformal gradient soliton. 
In this paper, we elucidate the structure of complete gradient conformal solitons under some assumption, and provide some applications to gradient Yamabe solitons. These results enhance the understanding gained from previous research. Furthermore, we give an affirmative partial answer to the Yamabe soliton version of Perelman's conjecture. 
\end{abstract}

\maketitle

\bibliographystyle{amsplain}


\section{Introduction}\label{intro}

An $n$-dimensional Riemannian manifold $(M^n,g)$ is called a gradient Yamabe soliton if there exists a smooth function $F$ on $M$ and a constant $\rho\in \mathbb{R}$, such that 
$$\nabla \nabla F=(R-\rho)g,$$
where, $R$ is the scalar curvature on $M$. 
If $F$ is constant, $M$ is called trivial.

One of the most interesting problem of the Yamabe soliton is the Yamabe soliton version of Perelman's conjecture, that is, ``Any complete (steady) gradient Yamabe soliton with positive sectional curvature under some natural assumption is rotationally symmetric''.
The problem was first considered by P. Daskalopoulos and N. Sesum \cite{DS13}.
They showed that any locally conformally flat complete gradient Yamabe soliton with positive sectional curvature is rotationally symmetric. 
Later, G. Catino, C. Mantegazza and L. Mazzieri \cite{CMM12} and H.-D. Cao, X. Sun and Y. Zhang \cite{CSZ12} also considered the same problem. In particular, Cao, Sun and Zhang showed that any locally conformally flat complete gradient Yamabe soliton with positive scalar curvature is rotationally symmetric. 

To understand the gradient Yamabe soliton, many generalizations of it have been introduced. For examples, (1) Almost gradient Yamabe solitons \cite{BB13}, (2) Gradient $k$-Yamabe solitons \cite{CMM12},
(3) $h$-almost gradient Yamabe solitons \cite{Zeng20}.

To consider all these solitons, we consider the conformal gradient soliton defined by G. Catino, C. Mantegazza and L. Mazzieri \cite{CMM12}:

\begin{definition}[\cite{CMM12}]
Let $(M,g)$ be an $n$-dimensional Riemannian manifold. For smooth functions $F$ and $\varphi$ on $M$, $(M,g,F,\varphi)$ is called a {\it conformal gradient soliton} if it satisfies 
\begin{equation}\label{GCS}
\varphi g=\nabla\nabla F.
\end{equation}
If $F$ is constant, $M$ is called trivial.
The function $F$ is called the potential function.
\end{definition}

\begin{remark}
By the definition, all results in this paper can be applied to gradient Yamabe solitons, gradient $k$-Yamabe solitons, almost gradient Yamabe solitons and $h$-almost gradient Yamabe solitons.

We also remark that conformal gradient solitons were studied by Cheeger-Colding $($\cite{CC96}, see also \cite{Tashiro65}$)$.
\end{remark}

Complete conformal gradient solitons were classified first by Y. Tashiro 
\cite{Tashiro65}. 
In 2012, Catino, Mantegazza and Mazzieri introduced a groundbreaking perspective by focusing on the critical points of the potential function, which led to a classification result for conformal gradient solitons.  Notably, their proof has been significantly simplified \cite{CMM12}.
Recently, the author also provided a classification result of conformal gradient solitons \cite{Maeta24} (see Theorem \ref{main of Maeta24}).

Yamabe solitons are self-similar solutions to the Yamabe flow. 
Therefore, research aimed at determining their structure is of utmost importance. Consequently, numerous studies have been conducted on Yamabe solitons and $k$-Yamabe solitons. 
In particular, it has been shown, under certain assumptions, that the scalar curvature and $\sigma_k-$curvature remain constant for Yamabe solitons and $k$-Yamabe solitons (see, for example \cite{MM12}, \cite{BHS18}, \cite{CL22}). 
Based on their work, this paper investigates these solitons using the most generalized Yamabe solitons, that is, conformal gradient solitons. 
Moreover, within the framework of natural assumptions that have been explored thus far, the paper fully determines the structure of conformal gradient solitons. 
All the results in this paper extend to Yamabe solitons and $k$-Yamabe solitons, allowing for the complete determination of their structures as well.

\begin{theorem}\label{main2}
Let $(M^n,g,F,\varphi)$ be a nontrivial complete conformal gradient soliton.
Assume that $M$ satisfies the following $(A)$, $(B)$ or $(C)$.

\noindent
$(A)$ The function $\varphi$ satisfies that $|\varphi|\in L^1(M)$, $\int_M\Ric (\nabla F,\nabla F)\leq0$, and $|\nabla F|$ has at most linear growth on $M$. 

\noindent
$(B)$ The function $\varphi$ is nonnegative, and $|\nabla F|$ has at most linear growth on $M$. 
Let $u$ be a non-constant solution of 
\begin{equation*}
\left\{
\begin{aligned}
\Delta u+h(u)=0,\\
\int_Mh(u)\langle\nabla u,\nabla F\rangle \geq0,
\end{aligned}
\right.
\end{equation*}
for $h\in C^1(\mathbb{R})$, and 
the function $|\nabla u|$ satisfies
$$\int_{B(x_0,R)}|\nabla u|^2=o(\log R),~~~\text{as}~~~R\rightarrow +\infty.$$

\noindent
$(C)$ The manifold $M$ is parabolic and nontrivial with $\Ric (\nabla F,\nabla F)\leq0$ and $|\nabla F|\in L^\infty(M)$.

Then, $M$ is

$$(\mathbb{R}\times N^{n-1},ds^2+a^2\bar g,as+b,0),$$
where, $a,b\in\mathbb{R}$.

\end{theorem}

\begin{remark}
Here we remark that the parabolicity of $M$ is as follows: A Riemannian manifold $M$ is parabolic, if every subharmonic function on $M$ which is bounded from above is constant $($see \cite{Grigoryan99}$)$.  
Theorem $\ref{main2}$ $(A)$ is a generalization of Theorem $1$ in \cite{CL22} and Theorem $3$ in \cite{MM12}.
Theorem $\ref{main2}$ $(B)$ is a generalization of Theorem $7$ in \cite{CL22}.
Theorem $\ref{main2}$ $(C)$ is a generalization of Theorem $6$ in \cite{CL22}.
\end{remark}

We also give triviality results of conformal gradient solitons under some assumption considered in \cite{CL22}, \cite{BHS18} and \cite{MM12}.

\begin{theorem}\label{sub}
Let $(M^n,g,F,\varphi)$ be a complete conformal gradient soliton. If any of the following conditions is satisfied, then it is trivial.

$(A)$ The potential function satisfies that $F\geq K>0$ for some $K\in \mathbb{R}$, $\varphi\leq0$, and one of the following conditions is satisfied: $(i)$ $M$ is parabolic, $(ii)$ $|\nabla F|\in L^1(M)$, $(iii)$ $F^{-1}\in L^p(M)$ for some $p>1$, or $(iv)$ $M^n$ has linear volume growth.

$(B)$ The Ricci curvature satisfies that $\Ric (\nabla F,\nabla F)\leq0$, and
$|\nabla F|\in L^{p}(M)$ for $p>1$.

$(C)$ The potential function $F$ is nonnegative, $\varphi\geq0$ and 
$F\in L^p(M)$ for $p\geq0$.

$(D)$ The Ricci curvature is nonnegative, $\varphi\geq0$ and 
$$\int_{M\setminus B(x_0,R_0)}\frac{\exp(F)}{d(x_0,x)^2}<+\infty,$$
for some $x_0\in M$ and $R_0>0$.
\end{theorem}

\begin{remark}
Theorem $\ref{sub}$ $(A)$ is a generalization of Theorem $2$ in \cite{CL22}.
Theorem $\ref{sub}$ $(B)$ is a generalization of Theorem $6$ in \cite{CL22}.
Theorems $\ref{sub}$ $(C)$ and $(D)$ are similar to Theorems $3$ and $4$ in \cite{CL22}, Theorems $4$ and $5$ in \cite{MM12}, and Theorem $1.7$ in \cite{BHS18}.

As is well known, S. T. Yau proved the valuable maximum principle: ``On a complete Riemannian manifold $M$, if a nonnegative subharmonic function $F$ satisfies $F\in L^p(M)$ for $1<p<+\infty$, then $F$ is constant.'' An interesting aspect of Theorem $\ref{sub}$ $(C)$ is its resemblance to Yau's maximum principle.
\end{remark}

As a corollary, we have the following:

\begin{corollary}
Let $(M,g,F)$ be a complete steady gradient Yamabe solitons with nonnegative scalar curvature. If a nonnegative potential function $F$ satisfies that $F\in L^p(M)$ $(p\geq0)$, then it is trivial.
\end{corollary}

To show these theorems, we use the following result shown by the author \cite{Maeta24}.

\begin{theorem}[\cite{Maeta24}]\label{main of Maeta24}
A nontrivial complete conformal gradient soliton $(M^n,g,F,\varphi)$ is either

$(1)$ compact and rotationally symmetric, or

$(2)$ the warped product 
$$(\mathbb{R},ds^2)\times_{|\nabla F|} \left(N^{n-1},\bar g\right),$$
where, the scalar curvature $\bar R$ of $N$ satisfies 
$$|\nabla F|^2R=\bar R-(n-1)(n-2)\varphi^2-2(n-1)g(\nabla F,\nabla\varphi),$$ 

or

$(3)$ rotationally symmetric and equal to the warped product
$$([0,+\infty),ds^2)\times_{|\n F|}(\mathbb{S}^{n-1},{\bar g}_{S}),$$
where, $\bar g_{S}$ is the round metric on $\mathbb{S}^{n-1}.$

Furthermore, the potential function $F$ depends only on $s$.
\end{theorem}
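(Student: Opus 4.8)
The plan is to combine the structural trichotomy of Theorem~\ref{main of Maeta24} with an analytic rigidity statement forcing the potential to be harmonic. First I would invoke Theorem~\ref{main of Maeta24}: since the soliton is nontrivial and complete, it is one of the three warped products listed there, and in each case the potential $F$ depends only on the distinguished parameter $s$. Taking the trace of the defining equation $\varphi g=\nabla\nabla F$ gives $\Delta F=n\varphi$ globally, so the hypothesis $\varphi\geq 0$ says precisely that $F$ is a nonnegative subharmonic function.

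The heart of the argument---and the step I expect to be the main obstacle---is to upgrade subharmonicity to harmonicity. Here I would reproduce the integral method behind Theorem~\ref{CL22main}: its proof uses the $k$-Yamabe structure only to guarantee $\Delta F\geq 0$, and for a general conformal soliton this follows directly from $\Delta F=n\varphi\geq 0$. Concretely, choosing logarithmic cutoffs $\psi_R$ that equal $1$ on $B(x_0,R)$, are supported in $B(x_0,2R)$, and satisfy $|\nabla\psi_R|\lesssim 1/d(x_0,\cdot)$, and integrating the inequality $\psi_R^2\,\Delta F\geq 0$ by parts, the error terms are controlled by $\int_{M\setminus B(x_0,R_0)}\frac{F}{d(x_0,x)^2}\,dV$. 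Letting $R\to\infty$ and invoking the integrability hypothesis forces $\int_M\Delta F=0$; since $\Delta F\geq 0$ this yields $\Delta F\equiv 0$, i.e.\ $\varphi\equiv 0$. The delicate points are the correct weight in the cutoff and verifying that the cross terms genuinely vanish in the limit, which is exactly where the weighted integrability is consumed.

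With $\varphi\equiv 0$ the soliton equation becomes $\nabla\nabla F=0$, so $F$ is affine and $\nabla F$ is parallel; hence $|\nabla F|\equiv a$ for a constant $a$, with $a>0$ by nontriviality. In the warped product of Theorem~\ref{main of Maeta24} the warping function $|\nabla F|$ is then the constant $a$, so the metric is the Riemannian product $ds^2+a^2\bar g$. It remains to identify the base and fiber. The compact case is impossible, since an affine function on a closed manifold is constant, contradicting nontriviality; the case with base $\mathbb{R}$ is impossible because $F=as+b$ with $a\neq 0$ cannot satisfy $F\geq 0$ on all of $\mathbb{R}$. Thus only the rotationally symmetric case $(3)$ survives, giving the round fiber $\mathbb{S}^{n-1}$ and base $[0,\infty)$; writing $F=as+b$ with $s\in[0,\infty)$, the condition $F\geq 0$ yields $a>0$ and $b=F(0)\geq 0$, which is exactly the asserted model $\big([0,\infty)\times\mathbb{S}^{n-1},\,ds^2+a^2\bar g_S,\,as+b,\,0\big)$.
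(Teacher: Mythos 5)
The statement you were asked to prove is the structure trichotomy itself (Theorem~\ref{main of Maeta24}), which this paper does not prove but imports from \cite{Maeta24}: it asserts, for an \emph{arbitrary} nontrivial complete gradient conformal soliton --- with no sign condition on $\varphi$ or $F$ and no weighted integrability hypothesis --- that $(M,g)$ is compact and rotationally symmetric, or a warped product over $\mathbb{R}$, or a rotationally symmetric warped product over $[0,\infty)$, with $F$ a function of $s$ alone. Your proposal does not prove this. Its first step is to ``invoke Theorem~\ref{main of Maeta24}'', i.e.\ to assume the very statement under consideration, which makes the argument circular. Everything after that step is a proof of a \emph{different} result, namely Theorem~\ref{main}: you use the hypotheses $\varphi\geq 0$, $F\geq 0$ and $\int_{M\setminus B(x_0,R_0)}F/d(x_0,x)^2<+\infty$, none of which are available in Theorem~\ref{main of Maeta24}, to force $\varphi\equiv 0$ and then to eliminate cases $(1)$ and $(2)$. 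Indeed your conclusion --- that only the model $([0,\infty)\times\mathbb{S}^{n-1},\,ds^2+a^2\bar g_{S},\,as+b,\,0)$ survives --- is strictly stronger than, and incompatible with, the trichotomy you were supposed to establish, since that trichotomy must allow all three cases (compact rotationally symmetric solitons and products over $\mathbb{R}$ genuinely occur once the sign and integrability hypotheses are dropped).

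An actual proof requires a Tashiro-type argument, which is what \cite{Maeta24} (in the tradition of \cite{Tashiro65}, \cite{CC96}, \cite{CMM12}) carries out: from $\nabla\nabla F=\varphi g$ one shows $|\nabla F|$ is constant on level sets of $F$, so on the set of regular values the metric splits as $ds^2+|\nabla F|^2(s)\,\bar g$ with $F=F(s)$; one then analyzes the critical points of $F$ --- no critical point gives case $(2)$ together with the stated identity relating $R$, $\bar R$, $\varphi$ and $g(\nabla F,\nabla\varphi)$ via the warped-product curvature formulas, exactly one critical point gives case $(3)$ with round fibers because the level sets near a critical point are small geodesic spheres, and two critical points force compactness and rotational symmetry, case $(1)$. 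None of these ingredients appears in your proposal. (Your cutoff argument showing $\varphi\equiv 0$ under the extra hypotheses is essentially the paper's own proof of Theorem~\ref{main}, so it is sound in that context --- but it answers a different question.)
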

Therefore, to consider the Yamabe soliton version of Perelman's conjecture,  we only have to consider $(2)$ of Theorem \ref{main of Maeta24}.



\section{Proof of Theorem \ref{main2}}\label{classification}

In this section, we prove Theorem \ref{main2}, which includes significant advancements over the results presented in \cite{MM12} and \cite{CL22}. 
In particular, we completely elucidate the structure of gradient $k$-Yamabe solitons and gradient Yamabe solitons under the assumption as in \cite{CL22} and \cite{MM12}.

We first show some formulas which will be used later. For any conformal gradient soliton, we have
\begin{equation*}
\Delta {\nabla}_iF={\nabla}_i\Delta F+R_{ij}{\nabla}_jF,
\end{equation*}
\begin{equation*}
\Delta {\nabla}_iF={\nabla}_k{\nabla}_k{\nabla}_iF={\nabla}_k(\varphi g_{ki})={\nabla}_i\varphi,
\end{equation*}
and
\begin{equation*}
{\nabla}_i\Delta F={\nabla}_i(n\varphi)=n{\nabla}_i\varphi.
\end{equation*}
Hence, we have
\begin{equation}\label{f1.1}
(n-1){\nabla}_i\varphi+R_{ij}{\nabla}_jF=0,
\end{equation}
where, $R_{ij}$ is the Ricci tensor of $M$.
Therefore, one has
\begin{equation}\label{RicnF}
\langle \nabla \varphi,\nabla F\rangle=-\frac{1}{n-1}\Ric (\nabla F,\nabla F).
\end{equation}
By applying ${\nabla}_l$ to the both sides of $(\ref{f1.1})$, we obtain
\begin{equation}\label{f1.2}
(n-1){\nabla}_l{\nabla}_i\varphi+{\nabla}_lR_{ij} \cdot {\nabla}_jF+R_{ij}{\nabla}_l{\nabla}_jF=0.
\end{equation}
Taking the trace, we obtain
\begin{equation}\label{f1}
(n-1)\Delta{\varphi}+\frac{1}{2}~g(\nabla R,\nabla F)+R \varphi =0.
\end{equation}

We observe the following proposition.
\begin{proposition}\label{cptRicnF}
Any compact conformal gradient soliton with $$\int_M\Ric (\nabla F,\nabla F)\leq 0$$ is trivial.
\end{proposition}
\begin{proof}
By $\Delta F=n\varphi$ and \eqref{RicnF}, we have
\begin{align*}
\int_{M}\varphi^2
=&~\frac{1}{n}\int_{M}\varphi\Delta F\\
=&~-\frac{1}{n}\int_{M}\langle \nabla \varphi, \nabla F\rangle\\
=&\frac{1}{n(n-1)}\int_M\Ric(\nabla F,\nabla F)\leq0.
\end{align*}
Thus, one has $\varphi=0$ and $\nabla\nabla F=0.$ Hence, we have $\Delta F=0$. By the standard maximum principle, we have that $M$ is trivial.
\end{proof}

By using the above arguments, we show Theorem $\ref{main2}$.
~\\

\begin{proof}[Proof of Theorem $\ref{main2}$]
~

\noindent
$(A)$ If $M$ is compact, by Proposition \ref{cptRicnF}, $M$ is trivial.

Therefore, we assume that $M$ is noncompact.
By $\Delta F=n\varphi$ and \eqref{RicnF}, we have
\begin{align*}
\int_{B(x_0,R)}\varphi^2
=&~\frac{1}{n}\int_{B(x_0,R)}\varphi\Delta F\\
=&~-\frac{1}{n}\left\{\int_{B(x_0,R)}\langle \nabla \varphi, \nabla F\rangle +\int_{\partial B(x_0,R)}\varphi\langle \nu,\nabla F\rangle \right\}\\
=&~-\frac{1}{n}\left\{\int_{B(x_0,R)}-\frac{1}{n-1}\Ric(\nabla F,\nabla F) +\int_{\partial B(x_0,R)}\varphi\langle \nu,\nabla F\rangle \right\}\\
\leq&~\frac{1}{n(n-1)}\int_{B(x_0,R)}\Ric(\nabla F,\nabla F) 
+CR\int_{\partial B(x_0,R)}|\varphi|,
\end{align*}
where, $\nu$ is the outward unit normal to the boundary $\partial B(x_0,R)$.
Since $|\varphi|\in L^{1}(M)$, by taking $R\nearrow +\infty,$ one has
$$CR\int_{\partial B(x_0,R)}|\varphi|\rightarrow 0.$$
Therefore, by the assumption, we have
$$\int_M\varphi^2=0,$$
hence, $\varphi=0.$

By Theorem \ref{main of Maeta24}, we have 3 types of conformal gradient solitons.
\\

\noindent
Case 1. $M$ is compact. This case cannot happen.

\noindent
Case 2. $M$ is the warped product 
$$(\mathbb{R},ds^2)\times_{|\nabla F|} \left(N^{n-1},\bar g\right).$$

Since $\nabla \nabla F=0,$ we have
$$\nabla |\nabla F|^2=2\nabla_j\nabla_iF\nabla_iF=0.$$
Hence,  $\nabla F$ is a constant vector field. 
Therefore, we have $F(s)=as+b.$
\\

\noindent
Case 3. $M$ is rotationally symmetric and equal to the warped product
$$([0,+\infty),ds^2)\times_{|\n F|}(\mathbb{S}^{n-1},{\bar g}_{S}).$$

By the same argument as in Case 2, we have that $|\nabla F|$ is
 constant.
 Since $F'(0)=0$ (see the proof of Theorem 1.1 in \cite{Maeta24}), $F$ is constant. \\
 
\noindent
$(B)$ We will use the logarithmic cutoff argument developed by Farina, Mari and Valdinoci \cite{FMV13}. 
 \begin{equation}
 \eta(r)=
\left\{
 \begin{aligned}
&1 \ \ \ \ \ \ \ \ \ \ \ \ \ \ \ r\leq R,\\
&2-\frac{\log r}{\log R}\ \ \ \ r\in [R,R^2],\\
&0\ \ \ \ \ \ \ \ \ \ \ \ \ \ \ \ r\geq R^2.
\end{aligned} 
\right.
\end{equation}
By the soliton equation, we have
\begin{align}\label{7-1}
\int_M\varphi|\nabla u|^2\eta^2
=&\int_{B(x_0,R^2)} \nabla_i\nabla_jF\nabla_iu\nabla_ju\eta^2\\
=&-\int_{B(x_0,R^2)} \nabla_iF\nabla_j\nabla_iu\nabla_ju\eta^2
 +\nabla_iF\Delta u\nabla_ju\eta^2\notag\\
 &-\int_{B(x_0,R^2)}\nabla_iF\nabla_iu\nabla_ju\nabla_j\eta^2.\notag
\end{align}
Substituting
$$-\int_{B(x_0,R^2)} \nabla_iF\nabla_j\nabla_iu\nabla_ju\eta^2
=\frac{1}{2}\int_{B(x_0,R^2)} n\varphi|\nabla u|^2\eta^2+|\nabla u|^2\langle \nabla F,\nabla \eta^2\rangle,$$
into $\eqref{7-1}$, we have
\begin{align*}
\int_M\varphi|\nabla u|^2\eta^2
=&\frac{1}{2}\int_{B(x_0,R^2)} n\varphi|\nabla u|^2\eta^2
 +2\eta|\nabla u|^2\langle \nabla F,\nabla \eta\rangle\\
 &+\int_{B(x_0,R^2)} \langle \nabla F,\nabla u\rangle h\eta^2
 -2\eta\langle \nabla F,\nabla u\rangle \langle \nabla u,\nabla \eta\rangle.
\end{align*}
Therefore, we have
\begin{align*}
\frac{n-2}{2}\int_M\varphi|\nabla u|^2\eta^2
=&-\int_{B(x_0,R^2)} \eta|\nabla u|^2\langle \nabla F,\nabla \eta\rangle\\
 &-\int_{B(x_0,R^2)} \langle \nabla F,\nabla u\rangle h\eta^2
 +2\int_{B(x_0,R^2)} \eta\langle \nabla F,\nabla u\rangle \langle \nabla u,\nabla \eta\rangle\\
\leq&~3\int_{B(x_0,R^2)} \eta|\nabla u|^2 |\nabla F| |\nabla \eta|
 -\int_{B(x_0,R^2)} \langle \nabla F,\nabla u\rangle h\eta^2\\
\leq&~\frac{C}{\log R}\int_{B(x_0,R^2)\setminus B(x_0,R)} |\nabla u|^2
 -\int_{B(x_0,R^2)} \langle \nabla F,\nabla u\rangle h\eta^2,
\end{align*}
where, the last inequality follows from $|\nabla F|\leq Cr$ near infinity and the definition of the cut off function $\eta$. 
Take $R\nearrow +\infty.$ From this and the assumption, we have 
\begin{align*}
\frac{n-2}{2}\int_M\varphi|\nabla u|^2\eta^2
 \leq -\int_M \langle \nabla F,\nabla u\rangle h\eta^2\leq0.
\end{align*}
Since $u$ is a non-constant solution, we have $\varphi=0.$
Therefore, we have $\nabla\nabla F=0$.

By Theorem \ref{main of Maeta24}, we have 3 types of conformal gradient solitons.
\\

\noindent
Case 1.  $M$ is compact and rotationally symmetric.

Since $M$ is compact, by the standard Maximum principle, we have that $F$ is constant.
Therefore, $M$ is trivial.
\\

Cases 2 and 3 are considered by the same argument as in $(A)$.
~\\

\noindent
$(C)$
 Since \eqref{RicnF} and $\Delta F=n\varphi$, by a direct computation, 
 \begin{align*}
 \frac{1}{2}\Delta |\nabla F|^2
 =&~|\nabla\nabla F|^2+\Ric (\nabla F,\nabla F)+\langle \nabla F,\nabla \Delta F\rangle\\
 =&~|\nabla\nabla F|^2-\frac{1}{n-1}\Ric (\nabla F,\nabla F)\\
 \geq&~0, 
 \end{align*}
 where, the last inequality follows from the assumption.
 Since $M$ is parabolic and ${\rm sup}|\nabla F|<+\infty$, we have that $|\nabla F|$ is constant.
 Therefore, we have $\nabla \nabla F=0$.
 By the same argument as in $(B)$, we complete the proof.
\end{proof}


\section{Proof of Theorem \ref{sub}}\label{trivial}

To show Theorem $\ref{sub}$, we show the following lemma.

\begin{lemma}\label{key}
Let $(M,g,F,\varphi)$ be a complete conformal gradient soliton. If the nonnegative potential function $F$ satisfies that $\nabla F$ is a constant vector field, then it is trivial.
\end{lemma}

\begin{proof}
By Theorem \ref{main of Maeta24}, we have 3 cases.

\noindent
Case 1.  $M$ is compact and rotationally symmetric.

Since $M$ is compact, by the standard Maximum principle, we have that $F$ is constant.
\\

\noindent
Case 2. $M$ is the warped product 
$$(\mathbb{R},ds^2)\times_{|\nabla F|} \left(N^{n-1},\bar g\right).$$
Since $F$ depends only on $s\in \mathbb{R}$, one can get that
$$\nabla F=F'(s)\partial _s,$$
where, $F'(s)$ is a constant, say $a$.
If $a\not=0$, one has
$F(s)=as+b\geq0 $ on $\mathbb{R}$, which cannot happen.
Hence, $F$ is constant.
\\

\noindent
Case 3. $M$ is rotationally symmetric and equal to the warped product
$$([0,+\infty),ds^2)\times_{|\n F|}(\mathbb{S}^{n-1},{\bar g}_{S}).$$
The potential function satisfies that $F'(0)=0$. Combining this with the assumption, $F$ is constant.
\end{proof}
~\\

\begin{proof}[Proof of Theorem $\ref{sub}$]~

\noindent
(A) If $M$ is compact, by $\Delta (-F)=-n\varphi \geq0$, the standard Maximum principle shows that $F$ is constant. 
Therefore, we assume that $M$ is noncompact.
~\\

\noindent
$(i)$ Since $-F\leq-K$, and $\varphi\leq0$, we have
$$\Delta(-F)=-n\varphi\geq0.$$
Since $M$ is parabolic, $-F$ is constant. Therefore, $M$ is trivial.
\\

\noindent
$(ii)$ 
By a direct computation, 
\begin{equation}\label{deltaF}
{\rm div}\nabla (-F)=\Delta (-F)=-n\varphi\geq0.
\end{equation}
By Theorem \ref{main of Maeta24}, we have 3 types of conformal gradient solitons.
\\

\noindent
Case 1.  $M$ is compact and rotationally symmetric.

This case cannot happen.

To consider Cases 2 and 3, let us recall the following:

\begin{lemma}[\cite{CSC10}]\label{divX}
Let $X$ be a smooth vector field on a complete noncompact Riemannian manifold, such that, ${\rm div} X$ does not change the sign on $M$. If $|X|\in L^1({M}),$ then ${\rm div}_MX=0$.
\end{lemma}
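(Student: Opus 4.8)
The plan is to prove this by the standard cutoff-and-integrate-by-parts argument, exploiting the \emph{fixed sign} of $\mathrm{div}\, X$ together with the integrability of $|X|$. Without loss of generality I assume $\mathrm{div}\, X \geq 0$ on $M$ (otherwise replace $X$ by $-X$, which affects neither the hypotheses nor the conclusion). Fixing a base point $x_0 \in M$, I would, for each $R>0$, take a cutoff function $\eta = \eta_R$ exactly of the type already used in the proof of Theorem~\ref{main}: $0 \leq \eta \leq 1$, $\eta \equiv 1$ on $B(x_0,R)$, $\eta \equiv 0$ outside $B(x_0,2R)$, and $|\nabla \eta| \leq C/R$ for a constant $C$ independent of $R$. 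Here only the gradient bound is needed, not the Laplacian bound.

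The key identity comes from integrating the product rule $\mathrm{div}(\eta X) = \eta\,\mathrm{div}\, X + \langle \nabla \eta, X\rangle$ over $M$. Since $\eta X$ is compactly supported, its total divergence integrates to zero, so
\begin{equation*}
\int_M \eta\,\mathrm{div}\, X = -\int_M \langle \nabla \eta, X\rangle.
\end{equation*}
I would then control the right-hand side using the integrability hypothesis:
\begin{equation*}
\left| \int_M \langle \nabla \eta, X\rangle \right| \leq \int_{B(x_0,2R)\setminus B(x_0,R)} |\nabla \eta|\,|X| \leq \frac{C}{R}\int_{B(x_0,2R)\setminus B(x_0,R)} |X|.
\end{equation*}
Because $|X| \in L^1(M)$, the annular integral is dominated by the tail $\int_{M\setminus B(x_0,R)} |X|$, which tends to $0$ as $R \to \infty$; combined with the factor $C/R \to 0$, the whole right-hand side tends to $0$.

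For the left-hand side I would invoke the sign condition. Since $\mathrm{div}\, X \geq 0$ and $0 \leq \eta_R \leq 1$ with $\eta_R \nearrow 1$ pointwise as $R \to \infty$, the monotone convergence theorem gives $\int_M \eta_R\,\mathrm{div}\, X \to \int_M \mathrm{div}\, X$. Equating the two limits forces $\int_M \mathrm{div}\, X = 0$. As the integrand is nonnegative and continuous, this yields $\mathrm{div}\, X \equiv 0$, as claimed.

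There is no deep obstacle here; the substance of the argument is precisely the interplay between the sign hypothesis, which turns the left side into a \emph{monotone}, hence convergent, integral, and the $L^1$ control, which annihilates the boundary term. The only technical point to watch is the construction of $\eta_R$ with the stated gradient bound on a general complete manifold, where the distance function need not be smooth. I would handle this by composing a fixed smooth profile $\psi$ with the Lipschitz distance function $d(x_0,\cdot)$, so that $|\nabla \eta_R| \leq R^{-1}\|\psi'\|_\infty$ almost everywhere and the integration by parts remains valid for compactly supported Lipschitz functions; this is exactly the cutoff already employed earlier in the paper.
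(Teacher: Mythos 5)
Your proof is correct, but note that the paper itself offers no proof of this lemma: it is quoted as an imported result from \cite{CSC10}, so the comparison has to be made with that source. There, the statement is deduced from Yau's theorem on integrable $(n-1)$-forms: one sets $\omega = \iota_X\, dV$, the contraction of the volume form by $X$, so that $d\omega = (\mathrm{div}\, X)\, dV$ and $|\omega| \leq |X| \in L^1(M)$; Yau's result supplies an exhaustion $B_i \nearrow M$ with $\int_{\partial B_i}\omega \to 0$, and the fixed sign of $d\omega$ then forces $\int_M d\omega = 0$. Your cutoff-and-integrate argument is an elementary, self-contained rendering of the same mechanism --- the $L^1$ hypothesis kills the boundary (annulus) contribution while the sign hypothesis makes the bulk integral converge --- and it buys independence from Yau's theorem as a black box, at the cost of the usual care with nonsmooth distance functions, which you handle correctly via a Lipschitz cutoff. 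Two minor points, neither a gap: the appeal to monotone convergence needs $\eta_R$ pointwise nondecreasing in $R$, which your profile construction $\eta_R = \psi(d(x_0,\cdot)/R)$ does give when $\psi$ is nonincreasing, though Fatou's lemma would suffice with no monotonicity since the integrand is nonnegative; and in the final bound you only need one of the two decaying factors (the factor $C/R$ alone, against the full $L^1$ norm, already does the job).
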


By Lemma \ref{divX}, we have
$\Delta F=0.$ 
Hence, we have $\varphi=0$, and $\nabla\nabla F=0.$
\\

\noindent
Case 2. $M$ is the warped product 
$$(\mathbb{R},ds^2)\times_{|\nabla F|} \left(N^{n-1},\bar g\right).$$

Since $\nabla\nabla F=0$, we have
$$\nabla |\nabla F|^2=2\nabla_j\nabla_iF\nabla_iF=0.$$
Hence,  $\nabla F$ is a constant vector field.
Set $|\nabla F|=a$. 
If $a\not=0$, 
$$a{\rm Vol}\,(\mathbb{R}\times N^{n-1})=+\infty.$$
From this and the assumption, we have $a=0$. Therefore, $F$ is constant, and $M$ is trivial.
\\

\noindent
Case 3. $M$ is rotationally symmetric and equal to the warped product
$$([0,+\infty),ds^2)\times_{|\n F|}(\mathbb{S}^{n-1},{\bar g}_{S}).$$
By the same argument as in Case 2, we have that $F$ is constant.
\\

\noindent
$(iii)$ Since $\Delta (-F)\geq0,$ one has
$$\Delta F^{-1}=2F^{-3}|\nabla F|^2-\Delta F F^{-2}\geq0.$$
By the Yau's Maximum principle, one has that $F^{-1}$ is constant.
\\

\noindent
$(iv)$
By Lemma 6.3 in \cite{SY94} and the assumption, we have
\begin{align}\label{2(4)}
\int_{B(x_0,R)}|\nabla F^{-1}|^2
\leq&\frac{C}{R^2}\int_{B(x_0,2R)}F^{-2}\\
\leq&\frac{C}{R^2K^2}{\rm Vol}(B(x_0,2R))\notag\\
\leq&\frac{\bar C}{RK^2}.\notag
\end{align}
Take $R\nearrow +\infty$. The righthand side of \eqref{2(4)} goes to $0$.
Therefore, we have that $F$ is constant. 
\\

\noindent
$(B)$ By $\Delta F=n\varphi$ and \eqref{RicnF},
we have
\begin{align}\label{6-1}
\frac{1}{2}\Delta |\nabla F|^2
=&|\nabla\nabla F|^2+\Ric(\nabla F,\nabla F)+\langle \nabla F,\nabla \Delta F\rangle\\
=&|\nabla\nabla F|^2-\frac{1}{n-1}\Ric(\nabla F,\nabla F).\notag
\end{align}
From this and $\Ric(\nabla F,\nabla F)\leq 0$, we have
$$\Delta |\nabla F|^2\geq0.$$

If $M$ is compact, by the standard Maximum principle, we have that $|\nabla F|$ is constant.

Assume that $M$ is noncompact. By the Yau's Maximum principle, $|\nabla F|$ is constant.

By $\eqref{6-1}$, 
$$|\nabla\nabla F|^2-\frac{1}{n-1}\Ric(\nabla F,\nabla F)=0.$$
Therefore, we have $\nabla\nabla F=0.$

By Theorem \ref{main of Maeta24}, we have 3 types of conformal gradient solitons.
\\

\noindent
Case 1.  $M$ is compact and rotationally symmetric.

Since $\Delta F=0$ and $M$ is compact, by the standard Maximum principle, we have that $F$ is constant.
\\

Cases 2 and 3 are considered by the same argument as in (A)-(ii).
\if0
\noindent
Case 2. $M$ is the warped product 
$$(\mathbb{R},ds^2)\times_{|\nabla F|} \left(N^{n-1},\bar g\right).$$

Since $\nabla F$ is a constant vector field.
Set $|\nabla F|=a$. 
By the assumption
$$a\Vol(\mathbb{R}\times N^{n-1})<+\infty.$$
Hence, we have $a=0$, and $F$ is constant.
\\

\noindent
Case 3. $M$ is rotationally symmetric and equal to the warped product
$$([0,+\infty),ds^2)\times_{|\n F|}(\mathbb{S}^{n-1},{\bar g}_{S}).$$
By the same argument as in Case 2, we have that $F$ is constant.
\fi
\\

\noindent
$(C)$ 
\noindent
Case 1.  $M$ is compact and rotationally symmetric.

Since $\Delta F=n\varphi\geq0$ and $M$ is compact, by the standard Maximum principle, we have that $F$ is constant.
\\

\noindent
Case 2. $M$ is the warped product 
$$(\mathbb{R},ds^2)\times_{|\nabla F|} \left(N^{n-1},\bar g\right).$$
Since $F$ depends only on $s\in \mathbb{R}$, one can get that
$$\nabla F=F'(s)\partial _s.$$ 
Since the potential function $F$ is nonnegative and $F'>0$, we have $F(s)>a>0$ on some interval $(s_0,+\infty)$. If $F''=0$ on $\mathbb{R}$, then $F'$ is constant. However it cannot happen because of Lemma \ref{key}. Hence $F''>0$ at some point and $F'>b>0$ on some interval $(s_1,+\infty)$. The volume form of the metric $ds^2+|F'(s)|^2\bar g$ is given by $|F'(s)|^{n-1}ds\wedge d\mu_N$, where $d\mu_N$ is the volume form of $N$ (see for example Page 33 in \cite{Petersen16}). Therefore, one has
\begin{align*}
\int_MF^p
&>\int_{s_0}^{+\infty}\int_N {a}^p (F')^{n-1}ds\wedge d\mu_N\\
&>{a}^p b^{n-1}\int_{max\{s_0,s_1\}}^{+\infty}\int_N ds\wedge d\mu_N
=+\infty,
\end{align*}
which is a contradiction.
\\

\noindent
Case 3. $M$ is rotationally symmetric and equal to the warped product
$$([0,+\infty),ds^2)\times_{|\n F|}(\mathbb{S}^{n-1},{\bar g}_{S}).$$
By the similar argument as in Case 2, we have a contradiction.~\\

\noindent
$(D)$
 If $M$ is compact, by the standard maximum principle, $F$ is constant. 
 
Assume that $M$ is a noncompact complete manifold. Since the Ricci curvature is nonnegative, we can take a cut off function $\eta$ on $M$ satisfying that 
\begin{equation}
\left\{
 \begin{aligned}
&0\leq\eta(x)\leq1\ \ \ (x\in M),\\
&\eta(x)=1\ \ \ \ \ \ \ \ \ (x\in B(x_0,R)),\\
&\eta(x)=0\ \ \ \ \ \ \ \ \ (x\not\in B(x_0,2R)),\\
&|\nabla\eta|\leq\frac{C}{R}\ \ \ \ \ \ \ (x\in M),\ \ \ \text{for some constant $C$ independent of $R$},\\
&\Delta\eta \leq\frac{C}{R^2}\ \ \ \ \ \ \ (x\in M),\ \ \ \text{for some constant $C$ independent of $R$},
\end{aligned} 
\right.
\end{equation}
where, $B(x_0,R)$ and $B(x_0,2R)$ are the balls centered at a fixed point $x_0\in M$ with radius $R$ and $2R$, respectively (cf. \cite{MM12}, \cite{BS18}). By $\Delta F=n\varphi$, one has
\begin{align}\label{intev}
0\leq 
&\int_{B(x_0,2R)}\eta(|\nabla F|^2+n\varphi)e^F\\
=&\int_{B(x_0,2R)}\eta\Delta e^F\notag\\
=&\frac{1}{n}\int_{B(x_0,2R)\setminus B(x_0,R)}\Delta\eta e^F\notag\\
\leq&\frac{C}{nR^2}\int_{B(x_0,2R)\setminus B(x_0,R)} e^F.\notag
\end{align}
By the assumption, $|\nabla F|^2+n\varphi=0.$ Thus, $F$ is constant.
\end{proof}


\noindent
 \textbf{Data availability statement} 

Data sharing not applicable to this article
as no datasets were generated or analysed during the current study.
~\\

\noindent
{\bf Conflict of interest}~

There is no conflict of interest in the manuscript.


\bibliographystyle{amsbook}

\end{document}